\def\ve{\varepsilon}
\def\Cat{\mbox{CAT}}

\def\ma{\mathfrak{M}}
\def\mm{\mathfrak{M}}

\def\G{{\cal G}}\def\P{\mathcal{P}}
\def\iso{{\rm ISO}}

\def\isomo{{\rm ISOM}}

\def\oN{\mathbb N}

\def\oR{\mathbb R}

\documentclass{article}
\usepackage{amsthm} 
\usepackage{graphicx,hyperref} 
\usepackage{amssymb,latexsym,times}
\newtheorem{theorem}{Theorem}
\newtheorem{definition}[theorem]{Definition}
\newtheorem{lemma}[theorem]{Lemma}
\newtheorem{corollary}[theorem]{Corollary}
\newtheorem{example}[theorem]{Example}
\begin{document}

\title{Tracing Internal Categoricity\footnote{I am indebted to John Baldwin for his critical reading and comments on an earlier manuscript of this paper, and  to Roman Kossak for a discussion on models of arithmetic in relation to this paper. I am also indebted to the referees for valuable remarks.}}
\author{Jouko V\"a\"an\"anen\thanks{Supported by the Faculty of Science of the University of Helsinki and grant 322795 of the Academy of Finland.}\\
Department of Mathematics and Statistics,\\ University of Helsinki\\
\and
Institute for Logic, Language and Computation,\\ University of Amsterdam}
\maketitle

\begin{abstract} Informally speaking, the categoricity of an axiom system means that its non-logical symbols have only one possible interpretation that renders the axioms true. 
Although non-categoricity  has  become ubiquitous in the second half of the 20th century whether one looks at  number theory, geometry or analysis,  
the first axiomatizations of such mathematical theories by Dedekind,  Hilbert, Huntington, Peano and Veblen were indeed categorical. A common resolution of the difference between the earlier  categorical axiomatizations and the more modern non-categorical axiomatizations   is that the latter derive their non-categoricity from  Skolem's Paradox and  G\"odel's Incompleteness Theorems, while the former, being second order, suffer from a heavy reliance on  metatheory, where the Skolem-G\"odel phenomenon re-emerges. Using second order meta-theory to avoid non-categoricity of the meta-theory would only seem to lead to an infinite regress. In this paper we maintain that internal categoricity breaks this traditional picture. It applies to both first and second order axiomatizations, although in the first order case we have so far only examples. It does not depend on the meta-theory in a way that would lead to an infinite regress. And it covers the classical categoricity results of early researchers. In the first order case it is weaker than categoricity itself, and in the second order case stronger. We give arguments suggesting that internal categoricity is the ``right" concept of categoricity.
\end{abstract}

\section{Introduction}

The concept of categoricity of a formal theory was introduced with this name by Veblen \cite{zbMATH02653775}, who notes that the name was suggested to him by John Dewey. Veblen introduced this concept in the context of geometry, but it turned out to have a vastly more general applicability. Veblen defines:

\begin{quote}
Inasmuch as the terms {\em point} and {\em order} are undefined one has a right, in thinking of the propositions, to apply the terms in connection with any class of objects of which the axioms are valid propositions. It is part of our purpose however to show that there is {\em essentially only one} class of which the twelve axioms are valid. In more exact language, any two classes $K$ and $K'$ of objects that satisfy the twelve axioms are capable of a one-to-one correspondence such that if any three elements $A$, $B$, $C$ of $K$ are in the order $ABC$, the corresponding elements of $K'$ are also in the order $ABC$. 
Consequently any proposition which can be made in terms of points and order either is in contradiction with our axioms or is equally true of all classes that verify our axioms. 
\ldots 
A system of axioms such as we have described is called {\em categorical} \ldots 
\end{quote}

In modern terminology we would say that the twelve axioms of Veblen are categorical in the following sense:
\begin{definition}
A set of axioms\footnote{For this definition to make sense it does not matter in which formal language the axioms are written as long as the concepts of {\em model} and {\em isomorphism} make sense. In this paper the axioms are first or second order axioms.} is categorical if  any two of its models  are isomorphic. 
 \end{definition}
Veblen points out what we would now call the {\em completeness} of categorical axiom systems: for any given sentence $\phi$ in the vocabulary of the axioms, either $\phi$ follows\footnote{In modern terminology Veblen's concept of ``follows" would be perhaps best called a semantic consequence relation.} from the axioms or $\neg\phi$ follows. 
As Veblen notes, the concept had been already used with a different name by Huntington \cite{zbMATH02659709}, who calls it `sufficiency', and by  Hilbert \cite{zbMATH02656983}, who is less explicit about it. For an excellent survey of the history of categoricity and completeness, we refer to \cite{Awodey2002-STECAC-6}.

If $L$ is a vocabulary, i.e. a set of relation, function and constant symbols, 
an $L$-model (or an $L$-structure) $\mm$ consists of a domain $M$, which is a non-empty set, together with an interpretation of the symbols of $L$ in the set $M$. Respectively an $L$-sentence of first (or second) order logic is a sentence the non-logical symbols of which are in $L$. The interpretation of a relation symbol $R$ is denoted $R^\mm$, the interpretation of a function symbol $f$ is denoted $f^\mm$, and the interpretation of a constant symbol $c$ is denoted $c^\mm$.
Isomorphism $\mm\cong\mm'$ of $L$-structures $\mm$ and $\mm'$ is defined in the usual way. In this paper identity $=$ is considered a logical symbol and therefore we do not explicitly mention whether identity is included in the vocabulary or not. The interpretation of identity is always equality. If $\mm$ is an $L$-structure and $L' \subseteq L$, then $\mm\restriction L'$ denotes the reduct of $\mm$ to the vocabulary $L'$.

The most famous as well as historically the first categorical axiomatization of a mathematical structure  is Dedekind's second order axiomatization of elementary arithmetic. Note that  in the second order setup the arithmetic operations of addition and multiplications are definable from the successor relation.

\begin{example}[Dedekind]\label{first}Let us consider the vocabulary $\{s,0\}$ of the theory of the successor function on the natural numbers.
The second order sentence $N^2(s,0)$:
\begin{equation}\label{dede}
\begin{array}{l}
\forall xy(s(x)=s(y)\to x=y)\ \wedge\\
\forall x\neg s(x)=0\ \wedge\\
\forall X((X(0)\wedge\forall x(X(x)\to X(s(x))))\to\forall xX(x))\end{array}\end{equation}
is categorical with the unique (up to isomorphism) model $(\oN,f,0)$, where $f(x)=x+1$. 
\end{example} 

Since in second order logic we can existentially quantify over the function $s$ and the constant $0$, we can axiomatize the countably infinite domain:

\begin{example}\label{first1}
The second order sentence $I^2=\exists F\exists x N^2(F,x)$
is categorical with the unique (up to isomorphism) model\footnote{Of the empty vocabulary.} $\oN$. 
\end{example}

Example \ref{first} is different from Example~\ref{first1} in the following important respect. If we take two models of $N^2(s,0)$, there is a {\em unique} isomorphism between them. But  if we take two models of the sentence $I^2$ of Example~\ref{first1}, there is an isomorphism (i.e. a bijection) between the models but the isomorphism is by no means unique, because the set $\oN$ permits a continuum of different bijections.

\begin{example}\label{second} We consider the vocabulary $\{R,\varepsilon\}$, where $R$ is unary and $\varepsilon$ is binary.
The second order sentence $P^2$:
\begin{equation}\label{dede2}
\begin{array}{l}
I^2\mbox{ relativized (see below for the definition of relativization) to $R$ }\wedge\\
\forall xy(x\varepsilon y\to R(x))\wedge\\
\forall xy(\forall z(z\varepsilon x\leftrightarrow z\varepsilon y)\to x=y)\wedge\\
\forall X\exists x\forall y(R(y)\to(X(y)\leftrightarrow y\varepsilon x))\end{array}\end{equation}
is categorical with the unique (up to isomorphism) model $(\P(\oN),\in,\oN)$, where $\in$ is the usual membership-relation between elements of $\oN$ and elements of $\P(\oN)$. 
\end{example} 
 
Again we may note that the isomorphisms manifesting the categoricity of $P^2$ are not unique because of the many automorphisms of the $R$-part. However, if the isomorphism is fixed on the $R$-parts, the rest is unique.

\begin{example}[Huntington]\label{reals} The second order axiomatization $R^2$ of the completely ordered field of real numbers, i.e. the conjunction of the first order axioms of ordered fields and the following {\em Least Upper Bound Principle}:
\begin{equation}
\begin{array}{l}
\forall X((\exists xX(x)\wedge\exists y\forall x(X(x)\to x<y))\to\\
\hspace{3mm}\exists y(\forall x(X(x)\to (x<y\vee x=y))\wedge\\
\hspace{5mm}\forall y'(\forall x(X(x)\to (x<y'\vee x=y'))\to (y<y'\vee y=y')))),\end{array}
\end{equation}
is categorical with the unique (up to isomorphism) model $(\oR,+,\cdot,0,1,<)$. The usual proof of the categoricity proceeds by first isolating the natural numbers as $0,1,1+1,1+1+1,\ldots$, then the rationals, and then the reals as the completion of the rationals. The isomorphism between any two models of $R^2$ is unique.
\end{example}

As a final example, let us consider set theory:
\def\ZFC{\mbox{ZFC}}

\begin{example}[Zermelo]\label{zer}
Let $\ZFC^2$ be the conjunction of the second order Zermelo-Fraenkel axioms, obtained from the ordinary  
Zermelo-Fraenkel axioms by replacing the Separation and Replacement Schemas by their second order versions. Models of $\ZFC^2$ are, up to isomorphism, of the form $(V_\kappa,\in)$, where $\kappa>\omega$ is inaccessible. Since there may be many inaccessibles, $\ZFC^2$ is not categorical unless we make large cardinal assumptions. However, as emphasised already by Zermelo \cite{zbMATH02562682}, $\ZFC^2$ is categorical in the weaker sense that if the height of the model is fixed, then there is, up to isomorphism, only one model. This weak form of categoricity of $\ZFC^2$ is sometimes called {\em quasi-categoricity}.

\end{example}

The range of categoricity among second order theories is  extensive. In fact, it is very hard to find a structure which would be not second order characterizable, without using a cardinality argument or  the Axiom of Choice, see \cite{MR3116536}.

\section{Preliminaries}

\def\Res{\mbox{Res}}
\def\rest{\!\restriction\!}

We considered above some examples of categoricity. Let us now set the stage for a more general approach. The remarkable property of second order logic is that it can express the categoricity of its own sentences. To see what this means we have to introduce some notation.

Let us consider  a finite vocabulary $L=\{R_1,\ldots,R_\alpha,f_1,\ldots,f_\beta\}$ and a unary predicate $U$  not in $L$. If $\ma$ is an $L\cup\{U\}$-structure, let the $L$-structure $\ma^{U}$ be the relativization of  $\ma\rest L$  to the interpretation of the predicate $U$. For $\ma^{U}$ to be a legitimate $L$-structure, something has to be assumed about the interpretation of $U$ as well as about  the interpretations of the function symbols in $\ma$. Let $\Res_{\{f_1,\ldots,f_\beta\}}(U)$ be the conjunction of the first order sentences
\begin{equation}\label{res}
\begin{array}{l}
\exists x_1 U(x_1)\\
\forall x_1\ldots x_n((U(x_1)\wedge\ldots\wedge U(x_n))\to U(f_j(x_1,\ldots,x_n))),\\
\end{array}
\end{equation}
where  $j\in\{1,\ldots,\beta\}$.  Certainly,  $\ma$ satisfies $\Res_{\{f_1,\ldots,f_\beta\}}(U)$ if and only if $\ma^{U}$ is an $L$-structure. If $\phi$ is a second order sentence, we let $\phi^{U}$  denote the result of relativizing $\phi$ to $U$. Thus in $\phi^{U}$ all first order quantifiers are restricted to range over elements of $U$, and the second order variables are restricted to range over subsets of $U$, relations over $U$ and functions on $U$. In consequence, if $\phi$ is an $L$-sentence and $\mm\models\Res_{\{f_1,\ldots,f_\beta\}}(U)$, then 
\begin{equation}\label{epow}\ma\models\phi^{U}\iff\ma^{U}\models\phi.
\end{equation}

Let 
 $L'=\{R'_1,\ldots,R'_\alpha,f'_1,\ldots, f'_\beta\}$ be another vocabulary, where the arity of each $R'_i$ is the same as the arity of $R_i$ and the same for the function symbols. We assume $L\cap L'=\emptyset$.  If $\phi$ is a second order sentence, we use  $\phi'$  to denote the result of replacing each $R_i$ by $R'_i$ and $f_j$ by $f'_j$ in $\phi$. Let $U'$ be a new unary predicate symbol not in $L\cup L'\cup\{U\}$.
%
Let $\iso_{L,L'}(F,U,U')$ be the second order sentence which says that the function  $F$ defines an isomorphism between the $L$-part relativized to $U$ and the $L'$-part relativized to $U'$, i.e. $\iso_{L,L'}(F,U,U')$ is the conjunction of the first order sentences
$$\begin{array}{l}
\forall x(U(x)\to U'(F(x)))\\
\forall xy(F(x)=F(y)\to x=y)\\
\forall x(U'(x)\to\exists y(U(y)\wedge F(y)=x))\\
\forall x_1\ldots x_n(R_i(x_1,\ldots ,x_n)\leftrightarrow R_i'(F(x_1),\ldots,F(x_n)))\\
\forall x_1\ldots x_n(F(f_j(x_1,\ldots ,x_n))=f_j'(F(x_1),\ldots,F(x_n))),\\
\end{array}$$ 
where $i\in\{1,\ldots,\alpha\}$ and $j\in\{1,\ldots,\beta\}$.

\begin{definition}For any second order $L$-sentence $\phi$, the second order\footnote{For simplicity, we use same names for second order relation (or function) variables and for (first order) relation and function symbols.} sentence $\Cat_\phi$ of the empty vocabulary is defined as follows:
$$\begin{array}{l}
\Cat_\phi: \hspace{1mm}\forall U U' R_1\ldots R_\alpha R_1'\ldots R'_\alpha f_1\ldots f_\beta f'_1\ldots f'_\beta \Cat^+_\phi.
\end{array}
$$where $\Cat^+_\phi$ is 
$$\begin{array}{l}
(\Res_{\{f_1,\ldots,f_\beta\}}(U)\wedge\Res_{\{f'_1,\ldots,f'_\beta\}}(U')\wedge
\phi^{U}\wedge\phi'^{U'})\to\exists F \ \iso_{L,L'}(F,U,U').
\end{array}
$$
\end{definition}

\begin{lemma}\label{eq}
Suppose $\phi$ is a second order $L$-sentence. The following conditions are equivalent: 
\begin{description}
\item[(C1)] $\phi$ is categorical.
\item[(C2)] $\Cat_\phi$ is valid in the empty vocabulary.
\item[(C3)] $\Cat^+_\phi$ is valid in the vocabulary $L\cup L'\cup\{U,U'\}$.

\end{description}
\end{lemma}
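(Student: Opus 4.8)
The plan is to prove the chain of equivalences $\mathrm{(C1)}\Rightarrow\mathrm{(C3)}\Rightarrow\mathrm{(C2)}\Rightarrow\mathrm{(C1)}$, unwinding the definitions of $\Cat_\phi$ and $\Cat_\phi^+$ and using the relativization equivalence \epow together with the fact that $\iso_{L,L'}(F,U,U')$ literally expresses that $F$ is an isomorphism between the relativized structures.

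\medskip

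First I would handle $\mathrm{(C1)}\Rightarrow\mathrm{(C3)}$. Let $\mm$ be an arbitrary $L\cup L'\cup\{U,U'\}$-structure satisfying the hypothesis of $\Cat_\phi^+$, i.e. $\mm\models\Res_{\{f_1,\ldots,f_\beta\}}(U)\wedge\Res_{\{f'_1,\ldots,f'_\beta\}}(U')\wedge\phi^U\wedge\phi'^{U'}$. By the remark around \epow, $\mm^U$ is a legitimate $L$-structure and $\mm^U\models\phi$; similarly, reading the primed vocabulary as a renamed copy of $L$, the $U'$-relativization $\mm^{U'}$ (with each $R_i'$ renamed back to $R_i$, each $f_j'$ to $f_j$) is a legitimate $L$-structure satisfying $\phi$. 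Since $\phi$ is categorical, there is an isomorphism $g\colon\mm^U\to\mm^{U'}$. Extend $g$ to a function $F$ on the whole domain of $\mm$ arbitrarily on elements outside $U^{\mm}$ (this is where we need that the domain of $\mm$ contains $U^\mm$ but $F$ need only be constrained on $U^\mm$); then $\mm\models\iso_{L,L'}(F,U,U')$ because each conjunct of $\iso_{L,L'}$ only refers to the behaviour of $F$ on $U$-elements and to $R_i,R_i',f_j,f_j'$, which match $g$ being an isomorphism. Hence $\mm\models\exists F\,\iso_{L,L'}(F,U,U')$, so $\mm\models\Cat_\phi^+$.

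\medskip

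Next, $\mathrm{(C3)}\Rightarrow\mathrm{(C2)}$ is essentially immediate: $\Cat_\phi$ is obtained from $\Cat_\phi^+$ by universally quantifying over exactly the symbols $U,U',R_1,\ldots,f_\beta'$, which are precisely the non-logical symbols occurring in $\Cat_\phi^+$. A structure of the empty vocabulary is just a non-empty set, and $\Cat_\phi$ holds in it iff every way of interpreting those symbols over that set satisfies $\Cat_\phi^+$; validity of $\Cat_\phi^+$ in the vocabulary $L\cup L'\cup\{U,U'\}$ gives exactly that. (One should note the harmless point that the domain of a model of the empty vocabulary can have any cardinality, so quantifying over all such domains and all interpretations of the symbols is the same as quantifying over all $L\cup L'\cup\{U,U'\}$-structures.)

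\medskip

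Finally, $\mathrm{(C2)}\Rightarrow\mathrm{(C1)}$ reverses the first step. Given two models $\mm_1,\mm_2\models\phi$, I would build a single structure $\mm$ of vocabulary $L\cup L'\cup\{U,U'\}$ whose domain is the disjoint union of the domains of $\mm_1$ and $\mm_2$, with $U^\mm$ the domain of $\mm_1$ carrying the $L$-structure of $\mm_1$, and $U'^\mm$ the domain of $\mm_2$ carrying the renamed $L'$-structure of $\mm_2$ (interpretations of symbols on the "wrong" part can be set arbitrarily, e.g. all relations empty and all functions constant, subject only to making $\Res$ hold, which is why $\Res$ asks for closure under the $f_j$ and nonemptiness of $U$). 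Then $\mm$ satisfies the antecedent of $\Cat_\phi^+$ by \epow, and $\mm\models\Cat_\phi$ (via (C2), plugging in these interpretations) yields an $F$ with $\mm\models\iso_{L,L'}(F,U,U')$; the restriction of $F$ to $U^\mm$ is then an isomorphism $\mm_1\cong\mm_2$ by inspection of the conjuncts of $\iso_{L,L'}$.

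\medskip

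The only genuinely delicate point — more bookkeeping than obstacle — is the correct handling of relativization and of the primed vocabulary: one must check that $\phi^U$ really does, via \epow, capture $\mm^U\models\phi$ for \emph{second order} $\phi$ (so that the second order quantifiers of $\phi^U$ ranging over subsets/relations/functions on $U$ correspond exactly to the second order quantifiers of $\phi$ interpreted in $\mm^U$), and that renaming $L$ to $L'$ commutes with relativization. Once this dictionary is fixed, all three implications are straightforward verifications.
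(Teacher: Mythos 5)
Your proposal is correct and in substance identical to the paper's proof: the two non-trivial steps are the same, namely using the relativization equivalence (\ref{epow}) to pass from categoricity of $\phi$ to validity of the categoricity sentence over an arbitrary interpretation, and, conversely, amalgamating two models of $\phi$ (one renamed to $L'$) into a single $L\cup L'\cup\{U,U'\}$-structure to extract an isomorphism from $\exists F\,\iso_{L,L'}(F,U,U')$. The only difference is that you traverse the cycle as (C1)$\Rightarrow$(C3)$\Rightarrow$(C2)$\Rightarrow$(C1) while the paper does (C1)$\Rightarrow$(C2)$\Rightarrow$(C3)$\Rightarrow$(C1); since the (C2)/(C3) link is the trivial passage between $\Cat_\phi$ and its matrix $\Cat^+_\phi$, this is immaterial.
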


\begin{proof}
 To see this, suppose first (C1). We show that $\Cat_\phi$ is valid. Suppose $M$ is an arbitrary non-empty set. We can consider $M$ the domain of a model of the empty vocabulary i.e. the vocabulary of $\Cat_\phi$. Let $\ma$ be the $L\cup L'\cup\{U,U'\}$-structure resulting from interpreting the non-logical symbols of $L\cup L'\cup\{U,U'\}$ in the domain $M$ in some arbitrary way. Let us assume $\ma$ satisfies $\Res_{\{f_1,\ldots,f_\beta\}}(U)\wedge\Res_{{\{f'_1,\ldots,f'_\beta\}}}(U')\wedge\phi^{U}\wedge\phi'^{U'}$. Thus, by (\ref{epow}), $(\ma\rest L\cup\{U\})^{U}$ is an $L$-structure satisfying $\phi$. Respectively, $(\ma\rest L'\cup\{U'\})^{U'}$ is an $L'$-structure satisfying $\phi'$. Changing vocabularies, the categoricity of $\phi$ yields $\ma\models\exists F \ \iso_{L,L'}(F,U,U')$. Clearly, (C2) implies (C3). Finally, assume (C3). Let $\mm_1$ and $\mm_2$ be two $L$-models of $\phi$. By changing the vocabulary, we can translate $\mm_2$ into an $L'$-model $\mm_2'$ of $\phi'$. Since $L\cap L'=\emptyset$, we can form an $L\cup L'\cup\{U,U'\}$-structure $\mm$ such that $(\ma\rest L\cup\{U\})^{U}=\mm_1$ and
$(\ma\rest L'\cup\{U'\})^{U'}=\mm_2'$. By (C3),
$\mm\models \exists F \ \iso_{L,L'}(F,U,U')$, whence $\mm_1\cong\mm_2$.

\end{proof}

It should be noted that even though $\Cat_\phi$ has no non-logical symbols,  there is a marked difference between $\Cat_\phi$ being true in a model (of the empty vocabulary) and it being valid in all models (of the empty vocabulary). There is nothing surprising in this. Truth in a model is, in logic in general, very far from being equivalent to truth in all models. However, the case of $\Cat_\phi$ is a very special one.

The truth of 
$\Cat_\phi$ in a model of (the empty vocabulary) of cardinality $\kappa$ means the same as categoricity of $\phi$ in models of cardinality $\le\kappa$. In other words, despite its appearance, the sentence $\Cat_\phi$ does not state the categoricity of $\phi$, because of size limitations. It is only the proposition  \emph{$\Cat_\phi$ is valid} which states it. 

For the categoricity of $\phi$ it obviously suffices that $\Cat_\phi$ has arbitrarily large models. If we let $\kappa$ denote the Hanf-number\footnote{The Hanf number of a logic is the least cardinal such that if a sentence of the logic has a model of size at least $\kappa$, it has arbitrarily large models.} of second order logic then it suffices that $\Cat_\phi$ is true in a model of cardinality $\kappa$ of the empty vocabulary. Since the Hanf-number of second order logic is less than the first extendible cardinal (\cite{MR0295904}), for $\phi$ to be categorical it suffices that $\Cat_\phi$ has a model of size at least the first extendible cardinal.

\section{Internal categoricity}

The idea behind internal categoricity is the observation that in familiar cases of categorical second order sentences $\phi$ the sentence $\Cat_\phi$ is not only valid but even provable. This is remarkable because  second order logic does not have a similar Completeness Theorem as first order logic. So there is no a priori reason why a valid sentence would be provable. 

Following \cite{MR0351742} we include in the axioms of second order logic the following schema:
Suppose $L$ is a vocabulary. The {\em $L$-Axiom Schema of Comprehension} is the following set of second order sentences:
\begin{equation}\label{CA}
\begin{array}{l}
\forall x_1\ldots x_n\forall X_1\ldots X_k\exists X\forall y_1\ldots y_m(X(y_1,\ldots,y_m)\leftrightarrow\\
\hspace{15mm}\phi(x_1,\ldots,x_n,X_1,\ldots, X_k,y_1,\ldots, y_m)),
\end{array}
\end{equation}
where $\phi(x_1,\ldots,x_n,X_1,\ldots, X_k,y_1,\ldots, y_m)$ is an arbitrary second order $L$-formula not containing $X$ free. By provability in second order logic we mean provability from the axioms of second order logic including  (\ref{CA}). The schema (\ref{CA}) is non-trivial even when $\phi(x_1,\ldots,x_n,X_1,\ldots, X_k,y_1,\ldots, y_m)$ is first order. 

\begin{definition}\label{intc}
Suppose $L$ is a vocabulary. A second order $L$-sentence $\phi$ is {\em internally categorical} if the sentence $\Cat_\phi$ (or equivalently, $\Cat^+_\phi$) is provable in second order logic.
\end{definition}

Since provable sentences are valid, internal categoricity implies categoricity. Moreover, internal categoricity can be established for important categorical sentences:

\begin{theorem} The following second order sentences are internally categorical:
\begin{enumerate}
\item The  axiomatization $N^2$ of $(\oN,s,0)$. 
(\cite{Parsons1990-PARTSV}, \cite{Feferman1995-FEFPFO}, \cite{MR3001547}, \cite{Walmsley2002-WALCAI-3}).
\item The  axiomatization $I^2$ of $\oN$ in the empty vocabulary.
\item The  axiomatization $P^2$ of $(\P(\oN),\in,\oN)$.
\item The  axiomatization $R^2$ of $(\oR,+,\cdot,0,1,<)$.
\end{enumerate}
\end{theorem}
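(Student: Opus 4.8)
The plan is to show, sentence by sentence, that the textbook categoricity argument for each of $N^2,I^2,P^2,R^2$ can be reproduced inside the formal calculus of second order logic with the Comprehension Schema \viite{CA}; it then suffices to derive $\Cat^+_\phi$ in that calculus (Definition~\ref{intc}). Consider first $N^2$. Assume the antecedent of $\Cat^+_{N^2}$, so that $(U,s,0)$ and $(U',s',0')$ are relativized models of the Dedekind axioms. By Comprehension introduce the binary relation
$$F(a,b)\ :\leftrightarrow\ \forall Z\big((Z(0,0')\wedge\forall xy(Z(x,y)\to Z(s(x),s'(y))))\to Z(a,b)\big),$$
the least relation containing $(0,0')$ and closed under $(s,s')$. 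The core of the proof is to derive, from the two relativized induction axioms together with the injectivity of $s,s'$ and the axioms $\neg s(x)=0$, $\neg s'(y)=0'$, that $F$ is the graph of a structure-preserving bijection $U\to U'$; this is precisely the internalization of the recursion theorem and of Dedekind's argument worked out in \cite{Parsons1990-PARTSV, Feferman1995-FEFPFO, MR3001547, Walmsley2002-WALCAI-3}. In outline: totality of $F$ on $U$ and surjectivity onto $U'$ are immediate inductions (in $U$, resp.\ in $U'$); functionality and injectivity are minimality arguments — e.g.\ if $F(0,b)$ with $b\neq 0'$, then the relation defined by $Z(x,y):\leftrightarrow F(x,y)\wedge\neg(x=0\wedge y=b)$ still contains $(0,0')$ and is closed under $(s,s')$, because $s(x)\neq 0$, contradicting minimality, and the successor step is analogous. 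Then $\exists F\,\iso_{L,L'}(F,U,U')$ is derivable, so $N^2$ is internally categorical.

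The sentence $I^2$ reduces to $N^2$: $(I^2)^U$ asserts that for some function $s$ on $U$ and some $0\in U$ the pair $(U,s,0)$ is a relativized model of $N^2$, and likewise for $U'$; eliminating these existential quantifiers and running the argument above verbatim — Comprehension is available with second order parameters, so nothing changes when $s,s'$ are variables rather than symbols — produces a bijection $U\to U'$, which is an isomorphism in the empty vocabulary. For $P^2$ one first applies the $I^2$ result to the $R$-parts to obtain, via Comprehension, a bijection $g$ between them; Extensionality then shows that $x\mapsto\{z:z\varepsilon x\}$ is injective, the Comprehension-style axiom of $P^2$ shows that the relevant subsets of $R$ are all realized, and transporting along $g$ and assembling with one further use of Comprehension yields the combined relation $F$; that $F$ is a bijection respecting $R$ and $\varepsilon$ is then a direct check, using induction on $R$ and Extensionality to handle the fit between the $R$-part and the rest.

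Finally $R^2$. Inside second order logic one rebuilds the tower $\oN\subseteq\oZ\subseteq\oQ\subseteq\oR$ in each model. By Comprehension let $N$ (resp.\ $N'$) be the least subset containing $0$ and closed under $x\mapsto x+1$; one proves $(N,{+}1,0)$ is a relativized model of $N^2$ (induction from minimality; injectivity of successor and $\neg(x+1=0)$ from the ordered field axioms, using $N\subseteq\{x:0\le x\}$, and $N$ closed under $+,\cdot$ by induction), and applies the $N^2$ case to get an isomorphism $N\to N'$, which respects $+,\cdot$ because these are recovered on $N$ by recursion from successor, a fact proved by induction on $N$. One extends this map first to the set of differences $a-b$ with $a,b\in N$ and then to the set of quotients, at each stage forming the extended relation by Comprehension and verifying well-definedness, bijectivity and the homomorphism property by field computations. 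The last step uses the Least Upper Bound Principle in both models: one first derives the Archimedean property (otherwise the $\oN$-part is bounded, its supremum $s$ satisfies $s-1<n$ for some $n\in N$, hence $s<n+1$, a contradiction), so that every element lies between two rationals; then for $c\in U$ one lets $F(c)$ be the supremum in $U'$ of the set of images of the rationals below $c$, which exists by LUB in $U'$ since that set is Comprehension-definable and bounded above by the image of any rational exceeding $c$. One checks $F$ is an order- and field-homomorphism and a bijection, surjectivity being obtained by running the same supremum construction backwards with LUB in $U$; assembling the pieces gives $\Cat^+_{R^2}$.

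I expect the principal difficulty to be not any single clever idea but the systematic bookkeeping of pushing these classical arguments through the deductive calculus: the model-theoretic uses of recursion and back-and-forth must be replaced throughout by ``least closed relation'' constructions, and at every appeal to Comprehension one must check that the defining formula is a legitimate second order formula with the right parameters and that the inductions invoked afterwards are genuine instances of the relativized induction axioms actually furnished by the antecedent of $\Cat^+_\phi$. In the $R^2$ case the extra delicate point is the completeness step, where the supremum must be shown to exist and to behave correctly using only the LUB axioms of the two given models, with no external appeal to Dedekind cuts or to the real line.
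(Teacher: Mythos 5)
Your proposal is correct and takes essentially the same route as the paper: the paper's proof is precisely the observation that in each case ``the classical proof of categoricity'' can be carried out using nothing beyond the axioms of second order logic with the Comprehension Schema \viite{CA}, and your argument simply executes that plan in detail (least-closed-relation recursion for $N^2$, reduction of $I^2$ to $N^2$ with second order parameters, transport along the $R$-part for $P^2$, and the $\oN\subseteq\oZ\subseteq\oQ\subseteq\oR$ tower plus the supremum map for $R^2$). The only element of the paper's proof you do not use is the alternative ``soft'' route via Theorem~\ref{comp} (verifying $\Cat_\phi$ in all Henkin models instead of exhibiting a derivation), which is an optional shortcut rather than a different argument.
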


\begin{proof} For a soft proof one can use Theorem~\ref{comp} below.
In all cases the proof is the classical proof of categoricity. When one examines the classical proof carefully, nothing beyond the axioms of second order logic is used. Of course, (\ref{CA}) is heavily used, although not in its full generality.
\end{proof}

The idea of the internal categoricity of $N^2$ appears already in \cite{Parsons1990-PARTSV}. In fact, when categoricity of a sentence $\phi$ is defined in \cite{zbMATH03022982} (English translation \cite{MR736686}), it is not defined as the truth of $\Cat_\phi$, but as the provability of $\Cat_\phi$ in the simple theory of types. We go further and require provability in second order logic itself. The phrase `internal categoricity' was introduced in the context of arithmetic in \cite{Walmsley2002-WALCAI-3}. It was advocated more generally in \cite{Vaananen2012-VNNSOL}, with more details in \cite{MR3326591}. We refer to \cite{Button2016-BUTSAC} for a recent discussion on this concept. 

The point of internal categoricity is that it establishes categoricity without requiring a set-theoretic background (meta-)theory. Internal categoricity of, say $N^2$ or $R^2$, is an ``internal" property of second order logic itself. Likewise, the proof of $\Cat_\phi$ gives finite evidence of the categoricity of $\phi$. Again, there is no need to refer to set theory or ``full semantics" (see below). The appealing contention that the categoricity of second order axiomatizations of familiar mathematical structures requires ``full" second order logic, with set-theoretical semantics, would be simply wrong.

Although second order logic does not have a Completeness Theorem in the same sense as first order logic, there is a more general concept of semantics, due to Henkin \cite{MR0036188}, which permits a Completeness Theorem.  In this more general semantics we do not let bound second order variables, such as $U,U',R_i,R_i',g_i,f'_i,F$ in $\Cat_\phi$ above, range over {\em all} possible relations (and functions) over the given domain $M$ but just over elements of a set $\G$ of relations (and functions) on $M$. This is in line with the development in first order logic, made completely explicit in the proof of G\"odel's Completeness Theorem, to consider arbitrary models, with arbitrary relations, functions and elements to interpret the non-logical symbols, even if one is interested in number theory, where the functions symbols $+$ and $\times$  have a clear intended meaning. In second order logic the intended meaning of the range of second order variables (for subsets of the domain) is the entire power-set of the domain, but we allow the more general set $\G$ as the range in order to achieve a Completeness Theorem.

\begin{definition}[\cite{MR0036188}]
Suppose $L$ is a vocabulary. The combination $(\ma,\G)$ of an $L$-structure $\ma$ and a set $\G$ satisfying the $L$-Axiom Schema of Comprehension is called a {\em Henkin $L$-model}. 
\end{definition}

Intuitively speaking the set $\G$ of a Henkin $L$-model $(\mm,\G)$ is closed in the sense that any set or relation that can be defined from the interpretation of the symbols of $L$ by means of quantification over $\G$ is actually already in $\G$. The schema (\ref{CA}) is quite strong even in the case that $L=\emptyset$.

It goes without saying that the set of {\em all} subsets and relations of a given domain $M$ satisfies (\ref{CA}). Likewise, it is obvious that in any Henkin model $(\mm,\G)$ the set  $\G$ is a Boolean algebra which contains all finite subsets of $M$, as well as the interpretations of the symbols of $L$. Because of the strong impredicativity\footnote{The relation $X$ that is defined by $\phi(x_1,\ldots,x_n,X_1,\ldots, X_k,y_1,\ldots, y_m)$ occurs in the range of the universally and existentially quantified second order variables in $\phi(x_1,\ldots,x_n,X_1,\ldots, X_k,y_1,\ldots, y_m)$.} of (\ref{CA}) it is a non-trivial task to construct Henkin models satisfying  (\ref{CA}). The method of Henkin \cite{MR0036188} yields  examples of such $(\mm,\G)$ with countable $\G$ and $M$. 

The point of Henkin models is:
 
\begin{theorem}[\cite{MR0036188}]\label{comp}
Suppose $L$ is a vocabulary. A second order $L$-sentence is provable if and only if it is valid in all Henkin $L$-models.
\end{theorem}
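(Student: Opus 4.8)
The plan is to run Henkin's model-existence construction, adapted to the many-sorted syntax of second order logic (one sort of first-order objects, plus sorts of relations and of functions of each arity). \textbf{Soundness}, i.e. the direction ``provable $\Rightarrow$ valid in all Henkin $L$-models'', is routine: each logical axiom of second order logic is true in every Henkin $L$-model, and modus ponens together with first- and second-order generalization preserve truth in a fixed Henkin model. The only axioms worth a comment are the instances of the Comprehension Schema \CA; these hold in $(\ma,\G)$ precisely because $\G$ was assumed to satisfy \CA, which is exactly the defining clause of a Henkin model. Hence every provable sentence is valid in all Henkin $L$-models.

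For the converse I argue by contraposition. Suppose $\phi$ is not provable, so $\{\neg\phi\}$ is consistent with respect to provability in second order logic (including \CA). Build an increasing chain of vocabularies $L=L_0\subseteq L_1\subseteq\cdots$ and of consistent theories $\{\neg\phi\}=T_0\subseteq T_1\subseteq\cdots$ as follows: given $L_n$, let $L_{n+1}$ add, for each first-order existential $L_n$-sentence $\exists x\,\psi$, a fresh first-order constant $c_\psi$, and for each second-order existential $L_n$-sentence $\exists X\,\psi$, a fresh relation or function constant $\mathbf C_\psi$ of the kind and arity of $X$; let $T_{n+1}$ add to $T_n$ all the corresponding \emph{Henkin witness} implications $\exists x\,\psi\to\psi(c_\psi/x)$ and $\exists X\,\psi\to\psi(\mathbf C_\psi/X)$. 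Each $T_{n+1}$ is consistent, since over disjoint fresh constants each witness implication is conservative over the preceding theory: were a finite collection of them to derive a contradiction, reinstating universal quantifiers via first- and second-order generalization on the fresh constants would already refute $T_n$. Put $L^*=\bigcup_n L_n$ and $T^*=\bigcup_n T_n$; this is a consistent $L^*$-theory in which \emph{every} existential $L^*$-sentence has a designated witness, and a Lindenbaum argument extends it to a maximal consistent set $T$ of $L^*$-sentences.

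Now build a term model $(\ma,\G)$. Let $M$ be the set of $\sim$-classes of closed first-order $L^*$-terms, where $s\sim t\iff(s=t)\in T$; by the equality axioms and maximality of $T$ this is a congruence compatible with all function symbols, so the first-order symbols of $L^*$ interpret on $M$ in the obvious way; interpret each relation constant $R$ of arity $m$ as $\{([t_1],\ldots,[t_m]):R(t_1,\ldots,t_m)\in T\}$ and each function constant analogously, again using the equality axioms to see this is well defined on $\sim$-classes. Let $\G$ consist, in each arity and kind, of exactly the interpretations of the second-order constants of $L^*$. One then proves the \textbf{Truth Lemma}: for every $L^*$-sentence $\sigma$ (allowing the second-order constants as parameters), $\sigma\in T$ iff $(\ma,\G)\models\sigma$, by induction on $\sigma$. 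The atomic case is immediate from the interpretations; the Boolean connectives use maximality of $T$; the first-order quantifier case uses the first-order Henkin witnesses (together with the quantifier axioms); and the second-order quantifier case uses the second-order Henkin witnesses and the instantiation/generalization axioms together with the fact that, by the very definition of $\G$, every member of $\G$ is named by a second-order constant of $L^*$. Finally, every instance of \CA over $L^*$ is provable, hence lies in $T$, so by the Truth Lemma the structure $(\ma,\G)$ satisfies \CA; thus $(\ma,\G)$ is a genuine Henkin $L^*$-model, and its reduct to $L$ (with the same $\G$, which still satisfies \CA for the smaller vocabulary) is a Henkin $L$-model of $\neg\phi$. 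Hence $\phi$ is not valid in all Henkin $L$-models, which is the contrapositive of what is wanted.

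The step I expect to be the \textbf{main obstacle} is the second-order part of this machinery. One must (i) add witnessing constants for second-order existentials and organize the $\omega$-stage construction so that the witness property survives the new formulas that the new constants create; (ii) ensure that $\G$, taken to be exactly the collection of interpretations of the second-order constants of $L^*$, is simultaneously rich enough that the Henkin witnesses make the $\exists X$ clause of the Truth Lemma close and lean enough — no extra relations or functions — that the dual $\forall X$ clause closes; and (iii) record, \emph{after} the Truth Lemma rather than before it, that because every instance of \CA lies in $T$ the structure $(\ma,\G)$ satisfies \CA and is therefore a Henkin model, so that no circularity arises in that assertion. The remaining points — conservativity of the Henkin implications surviving the chain, and the congruence $\sim$ behaving correctly with respect to function constants of positive arity — are routine but do require care.
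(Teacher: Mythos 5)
Your proposal is correct and is essentially the argument the paper relies on: the paper gives no proof of Theorem~\ref{comp}, deferring to Henkin \cite{MR0036188}, whose completeness proof is exactly this Henkinization--term-model construction (witness constants for both first- and second-order existentials, Lindenbaum extension, $\G$ taken as the interpretations of the second-order constants, Truth Lemma, and then \CA\ verified in the resulting model). Your handling of the delicate points --- conservativity of the second-order witness axioms, every member of $\G$ being named so both quantifier clauses of the Truth Lemma close, and checking \CA\ only after the Truth Lemma --- matches the standard (cited) treatment.
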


By combining Definition~\ref{intc} and Theorem~\ref{comp} we get:

\begin{theorem}\label{ic}
Suppose $L$ is a vocabulary. A second order $L$-sentence $\phi$ is {\em internally categorical} if the sentence $\Cat_\phi$ is true in all Henkin models.
\end{theorem}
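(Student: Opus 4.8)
The plan is to obtain the statement as a direct corollary of the two facts already in hand. First I would unfold Definition~\ref{intc}: $\phi$ is internally categorical exactly when $\Cat_\phi$ (equivalently $\Cat^+_\phi$) is provable in second order logic. Then I would apply Henkin's Completeness Theorem, Theorem~\ref{comp}: since $\Cat_\phi$ is a sentence of the empty vocabulary, taking $L=\emptyset$ there gives that $\Cat_\phi$ is provable if and only if it is valid in --- i.e. true in --- every Henkin $\emptyset$-model. Chaining the two equivalences yields that $\phi$ is internally categorical iff $\Cat_\phi$ is true in all Henkin models (so in fact the ``if'' in the statement can be upgraded to ``iff'').

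The one place I would slow down is the bookkeeping over vocabularies and over which of $\Cat_\phi$, $\Cat^+_\phi$ one feeds to Theorem~\ref{comp}. One may apply it to $\Cat_\phi$ over $L=\emptyset$ as above, or to $\Cat^+_\phi$ over $L\cup L'\cup\{U,U'\}$; I would note these give the same conclusion because passing between $\Cat_\phi$ and $\Cat^+_\phi$ uses only universal generalization over the free second order variables and their instantiation --- steps sound for Henkin semantics --- so the Henkin-semantics analogue of the equivalence (C2)$\Leftrightarrow$(C3) of Lemma~\ref{eq} holds. I would also remark that in a Henkin $\emptyset$-model $(\mm,\G)$ the outer second order quantifiers of $\Cat_\phi$ range precisely over the members of $\G$ of the relevant arities, and the comprehension schema~(\ref{CA}) that $\G$ must satisfy is exactly the ingredient used in the classical categoricity proofs; thus ``$\Cat_\phi$ true in all Henkin models'' is literally Henkin-validity of $\Cat_\phi$, which is what Theorem~\ref{comp} is about. (Since $\Cat_\phi$ has no non-logical symbols, whether one speaks of Henkin $\emptyset$-models or Henkin $L$-models is immaterial: the truth value of $\Cat_\phi$ in $(\mm,\G)$ depends only on the domain and on $\G$.)

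Frankly, there is no genuine obstacle here: the result repackages Definition~\ref{intc} through Theorem~\ref{comp}, and the excerpt's own lead-in (``By combining Definition~\ref{intc} and Theorem~\ref{comp} we get'') already signals this. If anything the subtle point is conceptual --- recognising that truth in all Henkin models is the right semantic counterpart of second order provability, so that internal categoricity, defined syntactically, thereby gains a semantic characterisation that does not presuppose the full power set of the domain. That observation is precisely the payoff of invoking Henkin's theorem here.
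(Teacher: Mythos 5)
Your proposal is correct and is exactly the paper's route: the paper derives Theorem~\ref{ic} by combining Definition~\ref{intc} with Henkin's Completeness Theorem (Theorem~\ref{comp}), with no further argument given. Your extra remarks on the vocabulary bookkeeping and on $\Cat_\phi$ versus $\Cat^+_\phi$ are sound, and your observation that the ``if'' is really an ``iff'' is also consistent with the paper's intent.
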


Theorem~\ref{ic} explains why internal categoricity is called ``internal":  It is {\em internal} in the sense of being internal to each $\G$, namely, it is required that if the interpretations of the symbols of the vocabulary are in $\G$, then so is the interpretation of $F$. Nothing is claimed about interpretations outside $\G$. Since Henkin semantics satisfies the Compactness Theorem, the sentences $N^2$ and $R^2$ are not categorical {\em across} Henkin models, i.e. there are Henkin models of e.g. $N^2$ which are non-standard. For every fixed $\G$ there is a unique model of $N^2$, but different $\G$ may give rise to non-isomorphic models.

For second order set theory $\ZFC^2$ internal categoricity holds only in the weaker sense of quasi-categoricity explained in Example~\ref{zer}. However, for example, for  $$\ZFC^2+\mbox{``there are no inaccessible cardinals $>\omega$"}$$ internal categoricity can be proved \cite{MR3326591}.

Our list of internally categorical second order sentences is only the beginning. There are many more examples. In fact, it would be rather surprising if there were an example of a mathematical structure, constructed without the use of Axiom of Choice, a cardinality argument, or the enumeration techniques \`a la G\"odel, which did not have an internally categorical second order characterization.

In sum, the classical categoricity results of second order logic, due to Dedekind, Veblen and others, hold in the stronger sense of internal categoricity. The advantage of internal categoricity is that it has always a finite proof in the formal language of second order logic itself. Therefore it does not depend on set-theoretical meta-theory in the same sense as ordinary categoricity.

\section{First order internal categoricity---mapping the landscape}

Because of the extraordinary ability of second order logic to express the categoricity of  its own sentences, it would seem that internal categoricity, based on the assumption of the categoricity statement holding in all Henkin models, has no role in first order logic. However, this is far from true. We will show that first order arithmetic and first order set theory have {\em a form} of  internal categoricity, reminiscent of the internal categoricity of their second order cousins. But we do not (know how to) define a general concept of first order internal categoricity. Rather, we are mapping the landscape in search of a good definition. At the moment we have just a few examples.

First order theories are not categorical {\em per se}. Their defining characteristic is the existence of models of all infinite cardinalities and in many important cases also the existence of non-standard models in each infinite cardinality separately. First order internal categoricity has to circumvent these undeniable facts. 

Let us start with first order Peano arithmetic.  Let $P(+,\cdot)$ be  the infinite set of Peano axioms with the Schema of Induction
\begin{equation}\label{induction}
\begin{array}{l}
\forall x_1\ldots x_n((\phi(0,x_1,\ldots, x_n)\wedge\\
\forall y(\phi(y,x_1,\ldots, x_n)\to \phi(y+1,x_1,\ldots, x_n)))\to\\
\forall y\phi(y,x_1,\ldots, x_n)),
\end{array}
\end{equation}
where $0$ and $1$ are defined terms denoting the identity elements of $+$ and $\cdot$, respectively. Ordinarily we assume that the formula $\phi(y,x_1,\ldots, x_n)$ of (\ref{induction}) is any first order formula of the vocabulary $\{+,\cdot\}$ of arithmetic.  However, given a vocabulary $L$, let us use $P(+,\cdot,L)$ to denote the extension of $P(+,\cdot)$ where the Induction Schema (\ref{induction}) is formulated for first order formulas $\phi(y,x_1,\ldots, x_n)$ of 
the vocabulary $\{+,\cdot\}\cup L$.

\begin{theorem}[Internal categoricity of Peano arithmetic]
 Suppose $\ma$ is a model of 
 \begin{equation}\label{woius}
\begin{array}{l}
P(+,\cdot,\{+',\cdot'\})\cup P(+',\cdot',\{+,\cdot\}).
\end{array}
\end{equation}
Then there is $\pi:\ma\!\restriction\!\{+,\cdot\}\cong \ma\!\restriction\!\{+',\cdot'\}$. Moreover, the mapping $\pi$ is first order definable on $\ma$.
\end{theorem}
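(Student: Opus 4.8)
The plan is to mimic the classical Dedekind categoricity argument, but carried out entirely inside the model $\ma$, using the induction schemas of both copies applied to formulas that mention the \emph{other} vocabulary. Write $\ma_1=\ma\rest\{+,\cdot\}$ and $\ma_2=\ma\rest\{+',\cdot'\}$, with zeros and ones $0,1$ and $0',1'$ the definable identity elements. First I would build, inside $\ma$, a first order formula $\pi(x,y)$ that is intended to define the isomorphism. The natural candidate is the ``$x$-th iterate'' relation: say that $\pi(x,y)$ holds iff there is a (coded) finite sequence witnessing that starting from $0'$ and adding $1'$ exactly $x$ times yields $y$. Coding finite sequences is available because $P(+,\cdot)$ proves the existence of a $\beta$-function (G\"odel's $\beta$-lemma is provable in $P$, indeed in much less), and crucially the $\beta$-function formula may be taken in the pure arithmetic vocabulary $\{+,\cdot\}$, so it is legitimate fodder for the induction schema $P(+,\cdot,\{+',\cdot'\})$.

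The key steps, in order, are: (1) using the $\beta$-function in $\ma_1$, show $\ma\models\forall x\,\exists! y\,\pi(x,y)$ — existence by induction on $x$ in $\ma_1$ with the formula $\exists y\,\pi(x,y)$ (which mentions $+',\cdot',0',1'$, hence needs the extended schema), uniqueness similarly; this gives a function $\pi:\ma_1\to\ma_2$. (2) Symmetrically, using the $\beta$-function in $\ma_2$, define $\rho:\ma_2\to\ma_1$ the same way with roles reversed, needing $P(+',\cdot',\{+,\cdot\})$. (3) Show $\ma\models\forall x(\rho(\pi(x))=x)$ by induction on $x$ in $\ma_1$, and $\forall y(\pi(\rho(y))=y)$ by induction in $\ma_2$; this makes $\pi$ a bijection. (4) Prove $\pi$ respects successor, i.e. $\pi(x+1)=\pi(x)+'1'$ and $\pi(0)=0'$, directly from the defining formula. (5) Upgrade to a homomorphism for $+$ and $\cdot$: fix $x$ and prove $\forall z\,\pi(x+z)=\pi(x)+'\pi(z)$ by induction on $z$ in $\ma_1$ using step (4), then $\forall z\,\pi(x\cdot z)=\pi(x)\cdot'\pi(z)$ by induction on $z$ using the additive case just established. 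Each of these inductions is over an $\{+,\cdot\}\cup\{+',\cdot'\}$-formula, so it is exactly what the hypothesis (\ref{woius}) licenses. The ``moreover'' is then immediate: $\pi$ is the function defined on $\ma$ by the explicit first order formula $\pi(x,y)$.

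The main obstacle I anticipate is step (1) — getting the $\beta$-function machinery to go through cleanly so that $\pi$ is provably total and functional. One has to be a little careful that the coding of finite sequences is done purely in $\{+,\cdot\}$ (so that the ``successor-iteration'' recursion can be unwound by a $\Sigma$-formula and handled by induction in $\ma_1$), while the \emph{values} being iterated live in $\ma_2$; the formula $\pi(x,y)$ therefore quantifies over codes in $\ma_1$ but asserts arithmetic facts in $\ma_2$, and one must check there is no hidden circularity and that induction applies to the right structure at each point. Once the recursion-theoretic bookkeeping is set up, everything else (steps 3--5) is a routine sequence of inductions, each of which is available precisely because the induction schema of each copy has been extended to formulas mentioning the other copy's symbols — this is the single feature of hypothesis (\ref{woius}) that makes the whole argument work, and it is the reason plain $P(+,\cdot)$ alone would not suffice.
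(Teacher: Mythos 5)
Your proposal is correct and is essentially the paper's own argument: both define the graph of $\pi$ by a first order formula asserting the existence of a coded finite approximation to the Dedekind recursion (your $\beta$-function coding is exactly the paper's ``$x$ codes an initial segment $I$, an initial segment $I'$, and a function $f:I\to I'$''), and both then use the induction schema of each copy, extended to formulas mentioning the other copy's symbols, to prove totality, uniqueness, surjectivity, and the $+$/$\cdot$ homomorphism clauses. Your only deviation---constructing an explicit inverse $\rho$ and proving $\rho\circ\pi=\mathrm{id}$, $\pi\circ\rho=\mathrm{id}$ instead of proving surjectivity and uniqueness directly by induction in the second copy---is an inessential variant of the same idea.
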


\begin{proof}
Note that $\ma$ may very well be a non-standard model of any infinite cardinality. 
Let $0$ and $1$ be the first elements of $\ma\!\restriction\!\{+,\cdot\}$, and respectively $0'$, $1'$.
Let $\psi(x,u,v)$ say that $x$ codes, using $+$ and $\cdot$, an initial segment $I$ with the last element $u$, of $\ma\!\restriction\!\{+,\cdot\}$, an initial segment $I'$ with the last element $v$, of 
$\ma\!\restriction\!\{+',\cdot'\}$, and a function $f:I\to I'$ such that $f(0)=0'$, $f(y+1)=f(y)+'1'$ for all $y\in I\setminus\{u\}$, and $f(u)=v$. 
Let $\phi(u,v)$ be the formula $\exists x\psi(x,u,v)$.

First we show by induction on $a$ that for every $a\in M$ there is $b\in M$ such that $\ma\models\phi(a,b)$. If $a=0$, then we need only code the pair $(0,0')$ to make $\phi(0,0')$ true. Suppose then $\phi(a,b)$ is true. Let $x,I,I'$ and $f$ be as in $\psi(x,a,b)$. Clearly,  $f(1)=1'$. Let $I_1$  be $I$ with $a+1$ added, and similarly $I'_1$. Extend $f$ to $f_1$ by mapping the last element $a+1$ of $I_1$ to the last element $b+'1'$ of $I'_1$. 
By coding $I_1,I'_1$ and $f_1$ we get $x_1$ such that $\psi(x_1,a+1,b+'1')$. Thus $\phi(a+1,b+'1')$ is true. By the Induction Schema, since $\ma$ satisfies $P(+,\cdot,\{+',\cdot'\})$, for every $a\in M$ there is $b$ such that $\ma\models\phi(a,b)$. Analogously, since $\ma$ satisfies $P(+',\cdot',\{+,\cdot\})$, we can prove by induction in $\ma\!\restriction\!\{+',\cdot'\}$ that for every $b\in M$ there is $a$ such that $\ma\models\phi(a,b)$.

Next we  show by induction on $a$ that if $\phi(a,b)$ and $\ma\models\phi(a,b')$, then $b=b'$. Suppose first $\phi(0,b)$ and $\phi(0,b')$. Let $x,I,I'$ and $f$ be as in $\psi(x,0,b)$, and $x_1,I_1,I_1'$ and $f'$  as in $\psi(x_1,0,b')$. Then $b=f(0)=0'$ and $b'=f'(0)=0'$. Thus $b=b'$. Suppose then $\phi(a+1,b)\wedge\phi(a+1,b')$ is true. Let $x,I,I'$ and $f$ be as in $\psi(x,a+1,b)$, and $x_1,I_1,I_1'$ and $f'$  as in $\psi(x_1,a+1,b')$. Since we assume that $a$ satisfies the claim, $f(a)=f'(a)$. Hence $b=f(a+1)=f(a)+'1'=f'(a)+'1'=f'(a+1)=b'$.  By the Induction Schema, since $\ma$ satisfies $P(+,\cdot,\{+',\cdot'\})$, for every $a\in M$ there is a unique $b$ such that $\ma\models\phi(a,b)$. 

Note that if $\phi(a,a')$ and $\phi(b,b')$ with $x,I,I'$ and $f$  witnessing $\psi(x_1,a,a')$ and 
$x_1,I_1,I_1'$ and $f'$  witnessing $\psi(x_1,b,b')$, then $f(y)=f'(y)$ for $y\in I\cap I'$. This is because the claim clearly holds for $y=0$, and if it holds for $y$, then $f(y+1)=f(y)+'1'=f'(y)+'1'=f'(y+1)$. So the claim follows by induction in $\ma\!\restriction\!\{+,\cdot\}$ as $\ma$ satisfies $P(+,\cdot,\{+',\cdot'\})$.

Finally, we show that the relation $F=\{(a,b)\in M\times M : \ma\models\phi(a,b)\}$ is an isomorphism $\ma\!\restriction\!\{+,\cdot\}\to \ma\!\restriction\!\{+',\cdot'\}$. Suppose $\alpha,\beta$ and $\gamma$ are   such that $\alpha+\beta=\gamma$. Suppose  $x,I,I'$ and $f$  witness $\psi(x,\gamma,f(\gamma))$. By the above, $f\subseteq F$. We use induction on $\beta$ in $\ma\!\restriction\!\{+,\cdot\}$, to prove that $f(\alpha)+'f(\beta)=f(\gamma)$, appealing to the fact that $\ma$ satisfies $P(+,\cdot,\{+',\cdot'\})$.  If $\beta=0$, then $\alpha=\gamma$ and $$f(\alpha)+'f(\beta)=f(\alpha)+'f(0)=f(\alpha)+'0'=f(\alpha)=f(\gamma).$$ Assume then $\beta=\delta+1$. Now 
$$f(\alpha)+'f(\beta)=f(\alpha)+'f(\delta+1)=f(\alpha)+'f(\delta)+'1'=$$
$$=f(\alpha+\delta)+'1'= f(\alpha+\delta+1)=f(\alpha+\beta)=f(\gamma).$$

Suppose then $\alpha,\beta$ and $\gamma$ are   such that $\alpha\cdot\beta=\gamma$. We use induction on $\beta$ in $\ma\!\restriction\!\{+,\cdot\}$ to prove that $f(\alpha)\cdot'f(\beta)=f(\gamma)$, appealing to the fact that $\ma$ satisfies $P(+,\cdot,\{+',\cdot'\})$.  If $\beta=0$, then $\gamma=0$ and $$f(\alpha)\cdot'f(\beta)=f(\alpha)\cdot'f(0)=f(\alpha)\cdot'0'=0'=f(\gamma).$$ Assume then $\beta=\delta+1$. Now 
$$f(\alpha)\cdot'f(\beta)=f(\alpha)\cdot'f(\delta+1)=f(\alpha)\cdot'(f(\delta)+'1')=$$
$$=f(\alpha)\cdot'f(\delta)+'f(\alpha)\cdot' 1'=f(\alpha\cdot\delta)+'f(\alpha)=f(\alpha\cdot\delta+\alpha)=f(\gamma).$$

 \end{proof}

Since $\pi$ above is definable and ``$\pi:\ma\!\restriction\!\{+,\cdot\}\cong \ma\!\restriction\!\{+',\cdot'\}$" is a first order statement---let us denote it $\isomo(+,\cdot,+',\cdot')$---we can rephrase the above theorem as a theorem of first order logic: 

\begin{corollary}[Internal categoricity of Peano arithmetic in first order logic]
 
 \begin{equation}\label{woius2}
\begin{array}{l}
P(+,\cdot,\{+',\cdot'\})\cup P(+',\cdot',\{+,\cdot\})\vdash \isomo(+,\cdot,+',\cdot').
\end{array}
\end{equation}
\end{corollary}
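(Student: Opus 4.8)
The plan is to derive the Corollary from the Theorem by an entirely syntactic argument — essentially observing that every step in the proof of the Theorem was carried out \emph{inside} an arbitrary model $\ma$ of \viite{woius}, using only first order reasoning available in the theory $P(+,\cdot,\{+',\cdot'\})\cup P(+',\cdot',\{+,\cdot\})$. Since the Theorem says that in \emph{every} such $\ma$ the first order definable relation $F=\{(a,b):\ma\models\phi(a,b)\}$ is an isomorphism of $\ma\rest\{+,\cdot\}$ onto $\ma\rest\{+',\cdot'\}$, the completeness theorem for first order logic gives \viite{woius2} immediately, \emph{provided} we package the conclusion as a single first order sentence $\isomo(+,\cdot,+',\cdot')$ in the vocabulary $\{+,\cdot,+',\cdot'\}$.

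The key steps, in order, are as follows. First I would fix once and for all the first order formula $\phi(u,v)\equiv\exists x\,\psi(x,u,v)$ from the proof of the Theorem, in the vocabulary $\{+,\cdot,+',\cdot'\}$, and let $\isomo(+,\cdot,+',\cdot')$ be the first order sentence asserting, of the binary relation $F$ defined by $\phi$, that it is total and single-valued (hence a function), surjective onto the $'$-domain, injective, and a homomorphism for both pairs of operations — i.e. the conjunction of
\[
\forall u\exists!v\,\phi(u,v),\quad \forall v\exists u\,\phi(u,v),\quad \forall u\,u_1\,v\,v_1((\phi(u,v)\wedge\phi(u_1,v_1))\to(u=u_1\leftrightarrow v=v_1)),
\]
together with $\forall u\,u_1\,v\,v_1((\phi(u,v)\wedge\phi(u_1,v_1))\to\phi(u+u_1,v+'v_1)\wedge\phi(u\cdot u_1,v\cdot'v_1))$. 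Second, I would note that this sentence is purely first order in $\{+,\cdot,+',\cdot'\}$, so it makes sense to ask whether \viite{woius} proves it. Third — the substantive point — I would observe that the proof of the Theorem never left the model: each ``induction on $a$'' was an application of an instance of the Induction Schema in $P(+,\cdot,\{+',\cdot'\})$ or $P(+',\cdot',\{+,\cdot\})$ to a first order formula of the combined vocabulary, and the coding of initial segments and finite functions via $+,\cdot$ is exactly the standard arithmetized sequence coding (Gödel's $\beta$-function, say), whose basic properties are theorems of $P(+,\cdot)$. Hence the existence-and-uniqueness of $b$ for each $a$, the coherence of the witnessing functions $f$, and the homomorphism properties are all derivable in \viite{woius}; by completeness, \viite{woius2} follows.

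Concretely, rather than re-verify the meta-level soundness of every line, I would argue once at the semantic level: by the Theorem, every model $\ma\models P(+,\cdot,\{+',\cdot'\})\cup P(+',\cdot',\{+,\cdot\})$ satisfies $\isomo(+,\cdot,+',\cdot')$, because in $\ma$ the relation $F$ defined by $\phi$ is shown to be an isomorphism — and ``$F$ is total, single-valued, surjective, injective, and a homomorphism'' is precisely what $\isomo$ says. So $\viite{woius}\models\isomo(+,\cdot,+',\cdot')$, and by Gödel's completeness theorem $\viite{woius}\vdash\isomo(+,\cdot,+',\cdot')$, which is \viite{woius2}.

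The main obstacle is not any single calculation but the bookkeeping needed to confirm that the proof of the Theorem really is first order in the combined vocabulary: one must check that the sequence-coding predicate $\psi(x,u,v)$ — ``$x$ codes an initial segment of $\ma\rest\{+,\cdot\}$, an initial segment of $\ma\rest\{+',\cdot'\}$, and a function between them with the stated recursion properties'' — is expressible by a first order formula over $\{+,\cdot,+',\cdot'\}$, which it is, via standard arithmetical coding of finite sequences using $+$ and $\cdot$ (the $\beta$-function lemma), and that every invocation of ``induction'' in that proof is an instance of \viite{induction} for a formula allowed by $P(+,\cdot,\{+',\cdot'\})$ or its primed twin, i.e. one in which the extra vocabulary $\{+',\cdot'\}$ (resp.\ $\{+,\cdot\}$) is permitted. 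Once that is granted — and it is exactly why the Theorem was stated with $P(+,\cdot,\{+',\cdot'\})$ rather than plain $P(+,\cdot)$ — the Corollary is just the completeness theorem applied to the Theorem.
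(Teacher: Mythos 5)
Your proposal is correct and matches the paper's route: the paper obtains the Corollary exactly by noting that the isomorphism of the Theorem is defined by a fixed first order formula, so that ``$\pi$ is an isomorphism'' becomes a single first order sentence $\isomo(+,\cdot,+',\cdot')$ true in every model of \viite{woius}, whence provability follows (implicitly by the completeness theorem). Your packaging of $\isomo$ as totality, single-valuedness, injectivity, surjectivity and preservation of $+$ and $\cdot$ via $\phi$ is just an explicit spelling-out of what the paper leaves informal.
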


The internal categoricity of first order arithmetic may seem simply false on the basis that there are a continuum of  non-isomorphic countable models of Peano's axioms. 
 Indeed, suppose $(\oN,+',\cdot')$ is a non-standard model of Peano arithmetic such that $+'$ and $\cdot'$ are $\Delta^0_2$-definable (by \cite{MR0195725}) as relations. Then $(\oN,+,\cdot,+',\cdot')$ satisfies 
$P(+,\cdot,\{+',\cdot'\})$ and $P(+',\cdot')$, but there is no reason to think that it satisfies $P(+',\cdot',\{+,\cdot\})$, for when the $\Delta^0_2$-definable relations $x+'y=z$ and $x\cdot'y=z$ are constructed, only induction for formulas involving $+'$ and $\cdot'$ are (or can be) considered. We know that $(\oN,+',\cdot')$ cannot satisfy $N^2$, so it is interesting to note that the failure of the Induction Axiom of (\ref{dede}) in $(\oN,+',\cdot')$ is manifested by the failure to satisfy $P(+',\cdot',\{+,\cdot\})$.

We now move to set theory. The first order analogue of $ZFC^2$ is the Zermelo-Fraenkel axiom system $ZFC$. Let us denote the vocabulary of this theory  $\{\ve_1\}$.  
To formulate internal categoricity of $ZFC$ we introduce another binary symbol $\ve_2$ and compare $ZFC$ in the vocabulary $\{\ve_1\}$, denoted $ZFC(\ve_1)$, and $ZFC$ in the vocabulary $\{\ve_2\}$, denoted $ZFC(\ve_2)$. 
If we allow symbols from a new vocabulary $L$ to occur in the Separation Schema and the Replacement Schema of $ZFC(\ve_1)$, we denote the extended theory $ZFC(\ve_1,L)$. Similarly $ZFC(\ve_2,L)$.

The second order theory $ZFC^2$ is not categorical per se. One has to fix the (inaccessible) cardinality of the model. In the first order context we accomplish this by assuming that the two models have the same domain.

\begin{theorem}[Internal categoricity of first order $ZFC$, \cite{JVZer}]\label{zfc}
 Suppose $\ma$ is a model of 
$$\begin{array}{l}
ZFC(\ve_1,\{\ve_2\})\cup ZFC(\ve_2,\{\ve_1\}).
\end{array}
$$
Then there is ${\pi}:\ma\!\restriction\!\{\ve_1\}\cong \ma\!\restriction\!\{\ve_2\}$. Moreover, the mapping ${\pi}$ is first order definable on $\mm$.

\end{theorem}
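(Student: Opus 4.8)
The plan is to mimic the proof of internal categoricity of Peano arithmetic, replacing ``initial segment of the natural numbers'' with ``set'' and the successor-recursion with the $\in$-recursion that builds the canonical isomorphism between two models of $ZFC$ sharing a domain. The guiding intuition: if $(M,\ve_1)$ and $(M,\ve_2)$ are both models of $ZFC$ with the same underlying set, then by $\in$-induction we can define, element by element, a bijection $\pi:M\to M$ with $a\ve_1 b \iff \pi(a)\ve_2\pi(b)$, and the cross-induction hypotheses supplied by allowing $\ve_2$ in the Separation/Replacement schemas of $ZFC(\ve_1,\{\ve_2\})$ (and vice versa) are exactly what make this definition go through inside $\ma$.

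Concretely, I would proceed as follows. First, work inside $\ma$ and set up the analogue of the formula $\psi$: let $\chi(x,a,b)$ say that $x$ (as an $\ve_1$-set, or a pair coded via $\ve_1$) codes a function $f$ whose $\ve_1$-domain is the $\ve_1$-transitive closure of $\{a\}$, whose $\ve_2$-range is the $\ve_2$-transitive closure of $\{b\}$, such that $f$ is an $(\ve_1,\ve_2)$-isomorphism between these two transitive-closure structures and $f(a)=b$; then put $\pi(a)=b \iff \exists x\,\chi(x,a,b)$. Second, prove \emph{existence}: for every $a$ there is such a $b$. This is $\in_1$-induction on $a$: given that every $\ve_1$-element $c$ of $a$ already has a (unique, by the next step) local iso $f_c$ to some $b_c$, the union $\bigcup_{c\ve_1 a} f_c$ is a set by Replacement and Union \emph{in the sense of $\ve_1$, applied to an $\ve_2$-valued function}, which is legitimate precisely because we have $ZFC(\ve_1,\{\ve_2\})$; one then adjoins the pair $(a,\{b_c : c\ve_1 a\}_{\ve_2})$, noting $\{b_c:\ldots\}$ exists as an $\ve_2$-set by Replacement in $ZFC(\ve_2,\{\ve_1\})$. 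Third, prove \emph{uniqueness} and \emph{coherence} of the local isomorphisms by a parallel $\in_1$-induction, exactly as $f(y)=f'(y)$ was shown in the Peano case: two codes for $a$ agree on the overlap of their domains because they agree on $\in_1$-predecessors. Fourth, assemble $F=\{(a,b):\ma\models\pi(a)=b\}$ and check it is a bijection $M\to M$ (surjectivity by the symmetric argument using $ZFC(\ve_2,\{\ve_1\})$) and that $a\ve_1 b \iff F(a)\ve_2 F(b)$, which is immediate from the local-iso clause once coherence is known. Finally, observe that $\pi$ is defined by the first-order formula $\exists x\,\chi(x,a,b)$, so it is first order definable on $\ma$, giving the ``moreover'' clause.

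The main obstacle — and the place where one must be careful rather than routine — is the recursion/induction step: $\in$-recursion is not a primitive of first order $ZFC$ but a theorem, and here it must be carried out \emph{mixing the two membership relations}, so one has to verify that the instances of Separation, Replacement, Union and Foundation actually invoked are instances available in $ZFC(\ve_1,\{\ve_2\})\cup ZFC(\ve_2,\{\ve_1\})$ and not something stronger. In particular the ``function'' whose range one collects at stage $a$ is $\ve_2$-valued while its domain is indexed by $\ve_1$, so the relevant Replacement instance has a formula containing \emph{both} $\ve_1$ and $\ve_2$ — this is exactly why the cross-vocabulary schemas are in the hypothesis, and exactly the point where a naive model $(\oN$-style counterexample$)$ satisfying only $ZFC(\ve_1)\cup ZFC(\ve_2,\{\ve_1\})$ (or with $\ve_2$ arising from some definable reshuffling that does not respect $\ve_1$-induction) would fail, mirroring the remark after the Peano theorem. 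A secondary technical point is coding: one needs a uniform way, within $ZFC(\ve_1)$, to code finite or set-sized partial functions into the range of $\ve_2$ as single objects, together with the decoding being absolute enough that the inductions above are expressible by a single first order formula; this is standard pairing-and-Kuratowski bookkeeping but must be set up once at the start. For the full details I would refer to \cite{JVZer}.
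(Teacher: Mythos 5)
There is a genuine gap, and it sits exactly at the step you pass over most quickly. In your existence induction you collect the images of the $\ve_1$-elements of $a$ and assert that $\{b_c : c\,\ve_1\, a\}$ ``exists as an $\ve_2$-set by Replacement in $ZFC(\ve_2,\{\ve_1\})$''. But $\ve_2$-Replacement (and $\ve_2$-Separation) can only be applied when the domain one replaces along, or the ambient set one separates from, is already an $\ve_2$-set. Here the index class $\{c : c\,\ve_1\, a\}$ is only known to be the extension of an $\ve_1$-set; from the point of view of $(M,\ve_2)$ it is just a class definable with $\ve_1$ as an extra predicate, and nothing in the hypotheses says it is covered by any $\ve_2$-set. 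The statement ``every $\ve_1$-set-sized class is included in an $\ve_2$-set'' is essentially the content of the theorem, not a tool available at the outset: it is precisely what would fail if $(M,\ve_2)$ were ``shorter'' or ``narrower'' than $(M,\ve_1)$, and ruling that out is where the shared domain $M$ must be used (for instance: if the $\ve_1$-ordinals embedded properly into an $\ve_2$-ordinal, cross-Replacement would make all of $M$ the extension of an $\ve_2$-set, which a Russell-style Separation argument forbids). The analogy with the Peano proof is misleading at just this point: there the next image $b+'1'$ exists trivially, whereas in set theory the existence of the image set on the $\ve_2$-side is the whole battle, and a naive $\ve_1$-recursion on transitive closures has no ambient $\ve_2$-set from which to carve it out.

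The cited proof in \cite{JVZer} is organized so as to meet this difficulty head-on: one first compares the two definable class well-orders of ordinals (a Zermelo-style comparison carried out with the cross-schemas), shows by a Burali-Forti/Russell-type argument using the common domain that neither can be isomorphic to a proper initial segment of the other, and then builds the isomorphism by recursion on rank through the cumulative hierarchies, where at each stage the image of an $\ve_1$-subset of $V^1_\alpha$ is a definable subclass of the already-available $\ve_2$-set $V^2_{\alpha'}$ and hence an $\ve_2$-set by Separation in $ZFC(\ve_2,\{\ve_1\})$. Your coding, uniqueness/coherence, and preservation steps are fine and would survive in such a proof, but as written the proposal assumes the height/width comparison it is supposed to establish, so the central step needs to be replaced by an argument of this kind rather than a direct imitation of the arithmetic recursion.
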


We may again ask, what about the incompleteness of $ZFC$?  If we start with a countable transitive model  $(M,\ve_1)$  of $ZFC$, we may use inner models or forcing to get other countable model $(M',\ve_2)$ of $ZFC$ with the same order-type of the class of ordinals. By applying a re-enumeration we may assume $M'=M$. Numerous set-theoretic statements, such as the Continuum Hypothesis $CH$ or the Souslin Hypothesis $SH$, may be true (or false) in $(M,\ve_1)$ but false (or true) in $(M,\ve_2)$. How is this utter incompleteness consistent with the idea of internal categoricity? 
The answer to this riddle is the following:
An important part of the proof that an inner model is a model of $ZFC$, or that a forcing extension preserves all the axioms of $ZFC$, is checking the validity of the Separation Schema and the Replacement Schema. Just as in the case of the Henkin construction for a non-standard model, we simply cannot allow extra non-logical symbols in the formulas of the Separation Schema and the Replacement Schema. If, on the other hand, we aim at the second order version $ZFC^2$ with bound relation variables instead of formulas in the Separation the Replacement Axioms, we  cannot get the independence results. 

Both arithmetic and set theory manifest internal categoricity via a definable isomorphism. This means that the statement of internal categoricity is first order expressible just as categoricity of a second order sentence is expressible in second order logic itself. 

\section{What is the value of internal categoricity?}

Categoricity is such a beautiful and simple concept that one hesitates to modify it in any way. However, categoricity is usually defined in set theory and may also {\em depend} on set theory.  We have argued that internal categoricity is a more basic concept that does {\em not} depend on set theory. 

In the second order case internal categoricity is stronger than categoricity and can be rephrased simply as provability of the statement of categoricity, because categoricity can be expressed in the second order language itself. In the first order case the situation is more complex. For one thing, we cannot directly express categoricity by first order means. Conceivably we could still consider the provability of the set-theoretic statement of categoricity but there are no non-trivial examples among first order theories.  Instead, the internal categoricity approach is to require  categoricity only as far as an alternative model can be `seen'. In the case of first order internal categoricity we offer an alternative model, such as  $+',\cdot',\epsilon'$, to be `seen' by adding it to the language and allowing it to be used in the axioms. If the axioms remain true in the extended language the power of the axioms generates an isomorphism.  

Loosely speaking, first order arithmetic and set theory are categorical in their own vicinity, among models they can see, but they are not categorical if models are allowed to be constructed from `outside' at will. The difference to second order arithmetic and set theory is that the bound relation variables of the latter reach all possible models, in the case of Henkin models all  models coded by the Henkin model. In a sense, measured by the Henkin model, they see all possible models, all possible models are in their vicinity, and still their axioms are true. That explains why they are categorical. Still the ultimate reason for their categoricity is that they possess the property of being  internally categorical. In this sense internal categoricity is the ``right" concept of categoricity, ``right" in the sense of being the most fundamental.

\end{document}